\theoremstyle{plain}
\newtheorem{theorem}{Theorem}[section]
\newtheorem{proposition}[theorem]{Proposition}
\theoremstyle{definition}
\newtheorem{definition}[theorem]{Definition}
\newtheorem{remark}[theorem]{Remark}
\newcommand{\uu}{{\mathbf{u}}}
\newcommand{\vv}{{\mathbf{v}}}
\begin{document}

\title[Formula for biggest minimal distance] {The formula for the largest minimal distance of binary LCD $[n,2]$ codes}

\author{Seth Gannon}
\address{Department of Mathematics\\ 
University of Louisville\\
Louisville, KY 40292, USA}
\email{dalton.gannon@louisville.edu}
\thanks{$\dag$ the corresponding author}
\author{Hamid Kulosman$^\dag$}
\address{Department of Mathematics\\ 
University of Louisville\\
Louisville, KY 40292, USA}
\email{hamid.kulosman@louisville.edu}
\subjclass[2010]{Primary 94B05}

\keywords{Complementary dual code; LCD code; Minimal distance of a linear code; $[n,2]$ binary code}

\date{}

\begin{abstract} 
In the 2017 paper by Dougherty, Kim, Ozkaya, Sok, and Sol\'e about the linear programming bound for LCD codes the notion $\mathrm{LCD}[n,k]$ was defined for binary LCD $[n,k]$-codes. We find the formula for $\mathrm{LCD}[n,2]$.
\end{abstract}

\maketitle

\section{Introduction}

A {\it linear code with complementary dual} (or an {\it LCD code} for short) is a linear code $C$ whose dual satisfies $C\cap C^\perp=\{0\}$. It was defined in \cite{m}, where a necessary and sufficient condition for a linear code over a field to be an LCD code was given in terms of the generator matrix. The LCD codes are being of considerable interest in the last few years since there are several newly discovered applications of them, including the applications in Quantum Coding Theory. One of important recent papers about LCD codes is the paper \cite{dkoss} which can serve as a foundational paper for a systematic investigation of LCD codes. In that paper, among other things, the authors introduced the notion $\mathrm{LCD}[n,k]$ for binary LCD $[n,k]$ codes and gave the values of $\mathrm{LCD}[n,2]$ for $n=3, 4, 5, 6,$ and $7$. In this paper we find a general formula for $\mathrm{LCD}[n,2]$.

\smallskip
The reader can consult \cite{hp} for all the notions that we use but do not define in this paper.  We will often be using the following theorem from \cite{m}:

\medskip\noindent
{\bf Massey's Theorem.} {\it If $G$ is a generator matrix for the $[n,k]$ linear code $C$ over a field $F$, then $C$ is an LCD code if and only if the $k\times k$ matrix $GG^T$ is nonsingular.}

\bigskip
\section{Results}

\begin{definition}[\cite{dkoss}]\label{LCD_def}
The number $\mathrm{LCD}[n,k]$ is defined in the following way:
\[\mathrm{LCD}[n,k]=\max\{d\;|\text{ there exists a binary } [n,k,d] \text{ LCD code\}}.\]
\end{definition}

The following values for $\mathrm{LCD}[n,2]$ were given in \cite[Theorem 3.4]{dkoss}: 
\begin{align*}
\mathrm{LCD}[3,2]=2,\\ 
\mathrm{LCD}[4,2]=2,\\ 
\mathrm{LCD}[5,2]=2,\\ 
\mathrm{LCD}[6,2]=3,\\ 
\mathrm{LCD}[7,2]=4.
\end{align*}

\medskip
In the next few propositions and a theorem we find a general formula for $\mathrm{LCD}[n,2]$. The word ``code" from now on means "binary linear code". Whenever we give a generator matrix for an $[n,2]$ code in standard form $G=[I_2\,|\,A]$, we will denote the word in the first row of $G$ by $\uu$ and the word in the second row of $G$ by $\vv$. Also we will call the submatrix $A$ of $G$ the {\it extension part} of $G$ and the digits of $\uu$ and $\vv$ that are in $A$ the {\it extension digits} of $\uu$ and $\vv$.

\begin{proposition}\label{lower_bound}
For any integer $r\ge 0$ we have:
\begin{align*}
\mathrm{LCD}[6r+3,2] &\ge 4r+2,\\
\mathrm{LCD}[6r+4,2] &\ge 4r+2,\\
\mathrm{LCD}[6r+5,2] &\ge 4r+2,\\
\mathrm{LCD}[6r+6,2] &\ge 4r+3,\\
\mathrm{LCD}[6r+7,2] &\ge 4r+4,\\
\mathrm{LCD}[6r+8,2] &\ge 4r+5.
\end{align*}
\end{proposition}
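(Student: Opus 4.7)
My plan is to prove each of the six inequalities by exhibiting an explicit LCD $[n,2]$-code achieving the stated distance, verifying the LCD property via Massey's Theorem. In each case I write the generator matrix in standard form $G=[I_2\,|\,A]$ with rows $\uu,\vv$. The three nonzero codewords are $\uu,\vv,\uu+\vv$; setting $a=w(\uu)$, $b=w(\vv)$ and letting $c$ be the number of positions in which both $\uu$ and $\vv$ carry a $1$ (necessarily all in the extension part, since the first two columns form $I_2$), we have $w(\uu+\vv)=a+b-2c$, so the minimum distance is $\min(a,b,a+b-2c)$. A direct computation gives
\[
GG^T\equiv\begin{pmatrix} a & c\\ c & b\end{pmatrix}\pmod 2,
\]
so $\det(GG^T)\equiv ab+c\pmod 2$ and by Massey's Theorem $C$ is LCD iff either $a,b$ are both odd and $c$ is even, or at least one of $a,b$ is even and $c$ is odd.

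The combinatorial feasibility of realizing such an $A$ requires $c\ge a+b-n$ (so the supports fit in the $n-2$ extension positions) and $c\le\min(a,b)-1$; given these, one constructs $A$ in the obvious way by partitioning the $n-2$ extension positions into ``both $1$'' ($c$ of them), ``only $\uu$'' ($a-1-c$), ``only $\vv$'' ($b-1-c$), and ``both $0$''. I take $a=b$ in every case and choose the smallest $c$ satisfying $c\ge a+b-n$ with the parity dictated by Massey's Theorem, checking also that $c\le a/2$ so that $a+b-2c\ge a$. Concretely, for $n=6r+k$ with $k=3,4,5,6,7,8$ I propose the triples
\[
(4r{+}2,4r{+}2,2r{+}1),\ (4r{+}2,4r{+}2,2r{+}1),\ (4r{+}2,4r{+}2,2r{+}1),
\]
\[
(4r{+}3,4r{+}3,2r),\ (4r{+}4,4r{+}4,2r{+}1),\ (4r{+}5,4r{+}5,2r{+}2),
\]
respectively for $(a,b,c)$. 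A routine verification in each case confirms feasibility, the parity condition $ab+c\equiv 1\pmod 2$, and the value $\min(a,b,a+b-2c)$ equal to the claimed lower bound.

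The main obstacle is not technical but case-analytic: the parity constraint from Massey's Theorem occasionally clashes with the extremal choice of $c$, forcing either a decrement in $a$ or an increment in $c$, and this is precisely what produces the non-monotone pattern $4r+2,4r+2,4r+2,4r+3,4r+4,4r+5$ across the six residues of $n\pmod 6$. Once the right $(a,b,c)$ is pinpointed for each residue, everything else is bookkeeping.
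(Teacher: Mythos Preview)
Your proof is correct and follows essentially the same strategy as the paper's: exhibit explicit $[n,2]$ codes achieving the stated distances and verify the LCD property via Massey's Theorem, which you neatly package as the parity condition $ab+c\equiv 1\pmod 2$. The paper writes its generator matrices concretely as a run of copies of $I_2$ followed by all-ones columns (so for $s=4,5$ it happens to use $(a,b,c)=(4r{+}s{-}1,\,4r{+}s{-}1,\,2r{+}s{-}2)$ rather than your $(4r{+}2,\,4r{+}2,\,2r{+}1)$, with the same resulting $d$), but the underlying argument is the same.
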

\begin{proof}
For $r\ge 0$ and $s\in\{3,4,5\}$ let $C$ be the code with generator matrix in standard form

\medskip
\centerline{
\begin{tikzpicture}
\draw (1,0) node
{
$G=\left[ \begin{array}{cc|c|cc|ccc}
1 & 0 & \cdots & 1 & 0 & 1 & \cdots & 1\\
0 & 1 & \cdots & 0 & 1 & 1 &  \cdots & 1
\end{array}
\right]\!\!.
$
};
\draw (.45,-.55) node {$\underbrace{\phantom{MMMMMMa}}$};
\draw (.5, -1.2) node{$2r+1$ $I_2$'s};
\draw (.49, -1.7) node {$= 4r+2$ digits};
\draw (3,-.55) node {$\underbrace{\phantom{MMMM}}$};
\draw (3.1, -1.2) node {$2r+s-2$};
\draw (3.15, -1.7) node {ones};
\end{tikzpicture}
}

\noindent
Then $wt(\uu)=wt(\vv)=4r+s-1$ and $wt(\uu+\vv)=4r+2$, so that $d=4r+2$. Using the block multiplication of matrices we conclude from Massey's theorem that $C$ is LCD. Hence the first three inequalities hold.

For $r\ge 0$ and $s\in\{6,7,8\}$ let $C$ be the code with generator matrix in standard form

\medskip
\centerline{
\begin{tikzpicture}
\draw (1,0) node
{
$G=\left[ \begin{array}{cc|c|cc|ccc}
1 & 0 & \cdots & 1 & 0 & 1 & \cdots & 1\\
0 & 1 & \cdots & 0 & 1 & 1 &  \cdots & 1
\end{array}
\right]\!\!.
$
};
\draw (.45,-.55) node {$\underbrace{\phantom{MMMMMMa}}$};
\draw (.5, -1.2) node{$2r+3$ $I_2$'s};
\draw (.49, -1.7) node {$= 4r+6$ digits};
\draw (3,-.55) node {$\underbrace{\phantom{MMMM}}$};
\draw (3.1, -1.2) node {$2r+s-6$};
\draw (3.15, -1.7) node {ones};
\end{tikzpicture}
}

\noindent
Then $wt(\uu)=wt(\vv)=4r+s-3$ and $wt(\uu+\vv)=4r+6$, so that $d=4r+s-3$. Using the block multiplication of matrices we conclude from Massey's theorem that $C$ is LCD.
Hence the last three inequalities hold.
\end{proof}

\begin{proposition}\label{upper_bound_1}
For any integer $r\ge 0$ we have:
\begin{align*}
\mathrm{LCD}[6r+3,2] &< 4r+3,\\
\mathrm{LCD}[6r+4,2] &< 4r+3,\\
\mathrm{LCD}[6r+7,2] &< 4r+5,\\
\mathrm{LCD}[6r+8,2] &< 4r+6.
\end{align*}
\end{proposition}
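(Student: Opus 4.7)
The plan is to deduce all four inequalities from a single Plotkin-type bound on the minimum distance of an arbitrary binary $[n,2]$ code; in fact neither Massey's theorem nor the LCD hypothesis is needed for this upper bound. The idea is that a binary $[n,2]$ code has only three nonzero codewords, so one can compute the sum of their three weights exactly and then apply the trivial inequality $\min \le \mathrm{average}$.

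Concretely, I would take the generator matrix in standard form $G = [I_2 \mid A]$ with rows $\uu$ and $\vv$, write $\uu'$ and $\vv'$ for their extension parts, and classify the $n-2$ extension coordinates by the pair $(\uu'_i, \vv'_i)$: let $a, b, c, e$ denote the numbers of coordinates where this pair equals $(1,0)$, $(0,1)$, $(1,1)$, $(0,0)$ respectively, so that $a + b + c + e = n-2$. A direct count then gives
\[
wt(\uu) = 1+a+c, \qquad wt(\vv) = 1+b+c, \qquad wt(\uu+\vv) = 2+a+b,
\]
whose sum equals $4 + 2(a+b+c) \le 4 + 2(n-2) = 2n$. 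Since $d$ is the smallest of the three weights on the left, this yields $3d \le 2n$, i.e., $d \le \lfloor 2n/3 \rfloor$. Substituting $n = 6r+3,\, 6r+4,\, 6r+7,\, 6r+8$ gives $d \le 4r+2,\, 4r+2,\, 4r+4,\, 4r+5$ respectively, which is exactly the set of strict inequalities claimed.

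I do not anticipate any real difficulty with this argument; it is essentially the Plotkin bound specialized to the case $k=2$. The noteworthy point is that the four residue classes handled here are precisely the ones in which $\lfloor 2n/3 \rfloor$ already matches the lower bound established in Proposition~\ref{lower_bound}, so the Plotkin bound alone is tight. The remaining values $n = 6r+5$ and $n = 6r+6$, where $\lfloor 2n/3 \rfloor$ exceeds the conjectured target by one, must be handled separately in a later proposition, presumably by invoking Massey's theorem to rule out the intermediate distance left over by Plotkin.
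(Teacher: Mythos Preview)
Your argument is correct. Neither your proof nor the paper's uses the LCD hypothesis or Massey's theorem here; both are pure weight-counting arguments on an arbitrary binary $[n,2]$ code. The difference is one of packaging. The paper argues by contradiction and case by case: assuming $d\ge 4r+3$ forces $\uu$ and $\vv$ each to have at least $4r+2$ extension ones, a pigeonhole count then shows their supports overlap in at least $2r+3$ extension coordinates, which caps $wt(\uu+\vv)$ at $4r$, a contradiction; the other three inequalities are declared to go ``along the same lines.'' Your version replaces this four-fold repetition with the single identity $wt(\uu)+wt(\vv)+wt(\uu+\vv)=4+2(a+b+c)\le 2n$, yielding $d\le\lfloor 2n/3\rfloor$ in one stroke. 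This is the Plotkin/Griesmer bound specialized to $k=2$, and it is the more efficient route: it handles all four residue classes at once and makes transparent exactly why $n\equiv 5$ and $n\equiv 0\pmod 6$ are the residues left over for Propositions~\ref{upper_bound_2} and~\ref{upper_bound_3}. Your closing remark about those two cases is also accurate.
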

\begin{proof}
We prove the first inequality. If $r=0$ it is clearly true. Assume $r\ge 1$. Suppose to the contrary. Let $C$ be an LCD $[6r+3,2]$ code with $d\ge 4r+3$. Up to permutation equivalence we may assume that the generator matrix $G$ of $C$ is in standard form. Then $\uu$ and $\vv$  have at least $4r+2$ extension digits one. Up to permutation equivalence we may assume that the first $4r+2$ extension digits of $\uu$ are ones and that the first $2r+3$ extension digits of $\vv$ are ones. So we have

\medskip
\centerline{
\begin{tikzpicture}
\draw (1,0) node
{
$G=\left[ \begin{array}{cc|ccc|ccc|c}
1 & 0 & 1 & \cdots & 1 & 1 & \cdots & 1 & \phantom{aa}\cdots\phantom{aa}\\
0 & 1 & 1 & \cdots & 1 & & \cdots & & \cdots
\end{array}
\right]\!\!.
$
};
\draw (.12,-.55) node {$\underbrace{\phantom{MMMM}}$};
\draw (.12, -1.2) node{$2r+3$};
\draw (2.1,-.55) node {$\underbrace{\phantom{MMMM}}$};
\draw (2.1, -1.2) node {$2r-1$};
\draw (4.1,-.55) node {$\underbrace{\phantom{MMMM}}$};
\draw (4.15, -1.2) node {$2r-1$};
\end{tikzpicture}
}

\noindent
Now $\uu+\vv$ can have at most $2+(2r-1)+(2r-1)=4r$ ones, contradicting the assumption $d\ge 4r+3$. The first inequality is proved.

The proofs of the remaining three inequalities go along the same lines.
\end{proof}

\begin{proposition}\label{upper_bound_2}
For any integer $r\ge 0$ we have
\[\mathrm{LCD}[6r+6,2] < 4r+4.\]
\end{proposition}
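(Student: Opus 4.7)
My plan is a proof by contradiction that combines the counting argument from Proposition~\ref{upper_bound_1} with a parity computation via Massey's theorem.

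Suppose, for contradiction, that there exists an LCD $[6r+6,2]$ code $C$ with $d \ge 4r+4$. After a permutation of coordinates we may assume $G = [\,I_2 \mid A\,]$ is in standard form, so $A$ has $6r+4$ columns. Let $a$ and $b$ denote the number of extension ones of $\uu$ and $\vv$ respectively, and let $c$ denote the number of extension positions where both $\uu$ and $\vv$ have a one.

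First I would derive combinatorial constraints on $a$, $b$, $c$. From $wt(\uu), wt(\vv) \ge 4r+4$ one obtains $a, b \ge 4r+3$. From $wt(\uu+\vv) = 2 + (a+b-2c) \ge 4r+4$ one obtains $a+b-2c \ge 4r+2$. On the other hand, by inclusion-exclusion $\uu$ and $\vv$ jointly cover at most $6r+4$ extension positions, so $a+b-c \le 6r+4$, whence $a+b-2c \le 12r+8-(a+b)$. Combining these yields $a+b \le 8r+6$, and together with $a+b\ge 8r+6$ this forces
\[
a=b=4r+3, \qquad c=2r+2.
\]

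Next I would apply Massey's theorem. Over $GF(2)$,
\[
GG^T = I_2 + AA^T,
\]
and the four entries of $AA^T$ reduce to the parities of $a$, $c$, $c$, $b$. Since $4r+3$ is odd and $2r+2$ is even, we obtain $AA^T = I_2$, hence $GG^T$ is the zero matrix, which is singular. This contradicts Massey's theorem and finishes the proof.

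The main obstacle, compared with Proposition~\ref{upper_bound_1}, is that the counting step alone does \emph{not} rule out the extremal configuration $a=b=4r+3$, $c=2r+2$; such a code would legitimately have $d=4r+4$. The essential new ingredient is that this extremal configuration is precisely the one killed by the LCD parity test, so the argument really needs the LCD hypothesis (rather than just a weight bound) to conclude.
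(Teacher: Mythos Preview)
Your argument is correct and follows the same overall strategy as the paper: use the weight conditions to pin down the extremal configuration, then kill that configuration with Massey's theorem. The difference is in execution. The paper explicitly permutes columns, splits the extension part into blocks, argues that the blocks are forced to a specific pattern, and only then computes $GG^T$ for the resulting concrete matrix. You instead abstract away the column arrangement by introducing the three counts $a,b,c$, derive $a=b=4r+3$, $c=2r+2$ by pure inclusion--exclusion, and then observe that $GG^T=I_2+AA^T$ depends only on the parities of $a,b,c$. This last observation is the real gain: it makes the Massey step a one-line parity check and removes any need to display the matrix or argue about permutation equivalence, so your version is shorter and arguably more transparent than the paper's.
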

\begin{proof}
 Suppose to the contrary. Let $C$ be an LCD $[6r+6,2]$ code with $d\ge 4r+4$. Up to permutation equivalence we may assume that the generator matrix $G$ of $C$ is in standard form. Then $\uu$ and $\vv$  have at least $4r+3$ extension digits one. Up to permutation equivalence we may assume that the first $4r+3$ extension digits of $\uu$ are ones and that the first $2r+2$ extension digits of $\vv$ are ones. So we have

\medskip
\centerline{
\begin{tikzpicture}
\draw (1,0) node
{
$G=\left[ \begin{array}{cc|ccc|ccc|c}
1 & 0 & 1 & \cdots & 1 & 1 & \cdots & 1 & \phantom{aa}\cdots \phantom{aa}\\
0 & 1 & 1 & \cdots & 1 & & \cdots & & \cdots
\end{array}
\right]\!\!.
$
};
\draw (.13,-.55) node {$\underbrace{\phantom{MMMM}}$};
\draw (.13, -1.2) node{$2r+2$};
\draw (2.1,-.55) node {$\underbrace{\phantom{MMMM}}$};
\draw (2.1, -1.2) node {$2r+1$};
\draw (2.1, -1.7) node {block A};
\draw (4.1,-.55) node {$\underbrace{\phantom{MMMM}}$};
\draw (4.15, -1.2) node {$2r+1$};
\draw (4.15, -1.7) node {block B};
\end{tikzpicture}
}

\noindent
Note the following two things:

(1) $\uu+\vv$ has at least $4r+4$ ones, hence all digits of $v$ in the blocks A and B are opposite to the digits of $\uu$;

(2) at least $2r+1$ digits of $\vv$ in the blocks A and B  are ones.

\noindent
These two things force $G$ to have the following form:

\medskip
\centerline{
\begin{tikzpicture}
\draw (1,0) node
{
$G=\left[ \begin{array}{cc|ccc|ccc|ccc}
1 & 0 & 1 & \cdots & 1 & 1 & \cdots & 1 & 0 & \cdots & 0\\
0 & 1 & 1 & \cdots & 1 & 0 & \cdots & 0 & 1 & \cdots & 1
\end{array}
\right]\!\!.
$
};
\draw (0,-.55) node {$\underbrace{\phantom{MMMM}}$};
\draw (0, -1.2) node{$2r+2$};
\draw (1.98,-.55) node {$\underbrace{\phantom{MMMM}}$};
\draw (1.98, -1.2) node {$2r+1$};
\draw (4.1,-.55) node {$\underbrace{\phantom{MMMM}}$};
\draw (4.15, -1.2) node {$2r+1$};
\end{tikzpicture}
}

Hence up to permutation equivalence 

\medskip
\centerline{
\begin{tikzpicture}
\draw (1,0) node
{
$G=\left[ \begin{array}{cc|c|cc|ccc}
1 & 0 & \cdots & 1 & 0 & 1 & \cdots & 1\\
0 & 1 & \cdots & 0 & 1 & 1 &  \cdots & 1
\end{array}
\right]\!\!.
$
};
\draw (.45,-.55) node {$\underbrace{\phantom{MMMMMMa}}$};
\draw (.5, -1.2) node{$2r+2$ $I_2$'s};
\draw (.49, -1.7) node {$= 4r+4$ digits};
\draw (3,-.55) node {$\underbrace{\phantom{MMMM}}$};
\draw (3.1, -1.2) node {$2r+2$};
\draw (3.15, -1.7) node {ones};
\end{tikzpicture}
}

\noindent
Using the block multiplication of matrices we conclude from Massey's theorem that $C$ is not LCD. We got a contradiction, the inequality is proved.
\end{proof}

\begin{proposition}\label{upper_bound_3}
For any integer $r\ge 0$ we have
\[\mathrm{LCD}[6r+5,2] < 4r+3.\]
\end{proposition}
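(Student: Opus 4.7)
The argument will follow the template of Propositions~\ref{upper_bound_1} and~\ref{upper_bound_2}: assume for contradiction that $C$ is an LCD $[6r+5,2]$ code with $d\geq 4r+3$ and derive a contradiction via Massey's theorem. Put $G=[I_2\mid A]$ in standard form, where $A$ has width $6r+3$, and introduce three counting parameters: $t_1=$ number of extension ones in $\uu$, $t_2=$ number of extension ones in $\vv$, and $o=$ number of extension positions where $\uu$ and $\vv$ are simultaneously $1$.

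The first step is a counting argument. The bounds $wt(\uu)\geq 4r+3$ and $wt(\vv)\geq 4r+3$ force $t_1,t_2\geq 4r+2$; the bound $wt(\uu+\vv)=2+t_1+t_2-2o\geq 4r+3$ forces $o\leq (t_1+t_2-4r-1)/2$; and the trivial estimate $t_1+t_2-o\leq 6r+3$ (the union of extension supports of $\uu$ and $\vv$ fits in the $6r+3$ available extension positions) forces $o\geq t_1+t_2-(6r+3)$. Combining the last two yields $t_1+t_2\leq 8r+5$, so up to swapping $\uu$ and $\vv$ only two cases remain: Case (a) $t_1=t_2=4r+2$ with $o=2r+1$, and Case (b) $(t_1,t_2)=(4r+2,4r+3)$ with $o=2r+2$.

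The second step is to invoke permutation equivalence to pin down $G$ in each case. In Case (a) the three nontrivial extension column types $(1,1)^T$, $(1,0)^T$, $(0,1)^T$ each occur exactly $2r+1$ times; in Case (b) they occur $2r+2$, $2r$, $2r+1$ times respectively. In both cases the multiset of columns, and hence $G$ up to column permutation, is completely determined by the pair $(t_1,t_2,o)$.

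The last step is a direct computation of $GG^T$ modulo~$2$ in each case: Case (a) yields $GG^T\equiv\bigl(\begin{smallmatrix}1&1\\1&1\end{smallmatrix}\bigr)$ and Case (b) yields $GG^T\equiv\bigl(\begin{smallmatrix}1&0\\0&0\end{smallmatrix}\bigr)$, both singular, so by Massey's theorem $C$ is not LCD---the desired contradiction. The main obstacle is the case split: unlike in Proposition~\ref{upper_bound_2}, the extension length $6r+3$ does not force a single shape for $G$, so two structurally distinct sub-cases appear, and the overlap parameter $o$ has to be introduced explicitly to separate them and show that each leads to a singular $GG^T$.
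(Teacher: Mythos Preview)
Your proof is correct, and it is organized quite differently from the paper's.

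The paper argues via successive permutation-equivalence normalizations: after placing the first $4r+2$ extension ones of $\uu$ and the first $2r+1$ of $\vv$, it introduces auxiliary digits $a,b_1,\dots,b_{2r},c,d$, treats $r=0$ and $r\ge 1$ separately, and then splits the $r\ge 1$ case into three options (i)--(iii) according to which of $a,\overline{b_{2r}},d$ equal $1$ and whether $a=0$ or $d=\overline c$. Each option is further rewritten into a block form and $GG^T$ is computed block-by-block.

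Your route is more direct and more conceptual: you introduce the overlap count $o$ and the inequality chain
\[
t_1+t_2-(6r+3)\ \le\ o\ \le\ \frac{t_1+t_2-4r-1}{2}
\]
to pin down $t_1+t_2\le 8r+5$, which immediately reduces to two cases (a) and (b) and forces the exact value of $o$ in each. Since $GG^T$ depends only on the column-type multiset $(o,\,t_1-o,\,t_2-o)$, you bypass all the explicit permutation rewriting and compute $GG^T$ once per case. The payoff is a uniform argument (no separate $r=0$ branch) with two sub-cases instead of the paper's three-plus-$r{=}0$ analysis, and the formulas $(GG^T)_{11}\equiv 1+t_1$, $(GG^T)_{12}\equiv o$, $(GG^T)_{22}\equiv 1+t_2\pmod 2$ make the singularity check immediate. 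The paper's approach, by contrast, keeps everything at the level of explicit matrices, which is more elementary but considerably longer.
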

\begin{proof}
 Suppose to the contrary. Let $C$ be an LCD $[6r+5,2]$ code with $d\ge 4r+3$. Up to permutation equivalence we may assume that the generator matrix $G$ of $C$ is in standard form. Then $\uu$ and $\vv$  have at least $4r+2$ extension digits one. Up to permutation equivalence we may assume that the first $4r+2$ extension digits of $\uu$ are ones and that the first $2r+1$ extension digits of $\vv$ are ones. So we have 

\medskip
\centerline{
\begin{tikzpicture}
\draw (1,0) node
{
$G=\left[ \begin{array}{cc|ccc|ccc|c}
1 & 0 & 1 & \cdots & 1 & 1 & \cdots & 1 & \phantom{aa}\cdots \phantom{aa}\\
0 & 1 & 1 & \cdots & 1 & & \cdots & & \cdots
\end{array}
\right]\!\!.
$
};
\draw (.13,-.55) node {$\underbrace{\phantom{MMMM}}$};
\draw (.13, -1.2) node{$2r+1$};
\draw (2.1,-.55) node {$\underbrace{\phantom{MMMM}}$};
\draw (2.1, -1.2) node {$2r+1$};
\draw (2.1, -1.7) node {block A};
\draw (4.1,-.55) node {$\underbrace{\phantom{MMMM}}$};
\draw (4.15, -1.2) node {$2r+1$};
\draw (4.15, -1.7) node {block B};
\end{tikzpicture}
}

\noindent
Note the following two things:

(1) $\uu+\vv$ has at least $4r+3$ ones, hence all digits of $v$ in the blocks A and B, except at most one, are opposite to the digits of $\uu$;

(2) at least $2r+1$ digits of $\vv$ in the blocks A and B  are ones.

\noindent
Hence, up to permutation equivalence, $G$ has the following form:

\medskip
\centerline{
\begin{tikzpicture}
\draw (1,0) node
{
$G=\left[ \begin{array}{cc|ccc|cccc|ccccc}
1 & 0 & 1 & \cdots & 1 & 1 & \cdots & 1 & 1 & b_1 & b_2 & \cdots & b_{2r} & c \\
0 & 1 & 1 & \cdots & 1 & 0 & \cdots & 0 & a & \overline{b_1} & \overline{b_2} & \cdots & \overline{b_{2r}} & d
\end{array}
\right]\!\!,
$
};
\draw (-1.18,-.55) node {$\underbrace{\phantom{MMMM}}$};
\draw (-1.18, -1.2) node{$2r+1$};
\draw (.91,-.55) node {$\underbrace{\phantom{MMMm}}$};
\draw (.91, -1.2) node {$2r$};
\end{tikzpicture}
}
\noindent
where the overline denotes the opposite digit. If $r=0$, we have 
$G=\left[\begin{array}{cc|ccc}
1 & 0 & 1 & 1 & c\\
0 & 1 & 1 & a & d
\end{array}
\right]\!\!.$
By (2) at least one of $a,d$ is $1$. If $a=1$, then (1) implies $d=\overline{c}$, so that 
$G=\left[\begin{array}{cc|ccc}
1 & 0 & 1 & 1 & c\\
0 & 1 & 1 & 1 & \overline{c}
\end{array}
\right]\!\!.$ But then $GG^T$ is equal to either $\begin{bmatrix} 1 & 0\\ 0 & 0\end{bmatrix}$, or to $\begin{bmatrix} 0 & 0\\ 0 & 1\end{bmatrix}$\!, so that, by Massey's theorem, $C$ is not LCD, a contradiction.  If $a=0$, then (2) implies $d=1$, so that 
$G=\left[\begin{array}{cc|ccc}
1 & 0 & 1 & 1 & c\\
0 & 1 & 1 & 0 & 1
\end{array}
\right]\!\!.$ But then $GG^T$ is equal to either $\begin{bmatrix} 1 & 1\\ 1 & 1\end{bmatrix}$, or to $\begin{bmatrix} 0 & 0\\ 0 & 1\end{bmatrix}$\!, so that, by Massey's theorem, $C$ is not LCD, a contradiction.  Assume now that $r\ge 1$.
Because of (1) we have either $a=0$, or $d=\overline{c}$. Because of (2), among the digits $a, \overline{b_1}, \overline{b_2}, \dots, \overline{b_{2r}},d$, the word $\vv$ has at least $2r+1$ ones. Hence among the digits $\overline{b_1}, \overline{b_2}, \dots, \overline{b_{2r}}$, the word $\vv$ has at least $2r-1$ ones. Hence, up to permutation equivalence, $G$ has the following form:

\medskip
\centerline{
\begin{tikzpicture}
\draw (1,0) node
{
$G=\left[ \begin{array}{cc|ccc|cccc|ccccc}
1 & 0 & 1 & \cdots & 1 & 1 & \cdots & 1 & 1 & 0 & \cdots & 0  & b_{2r} & c \\
0 & 1 & 1 & \cdots & 1 & 0 & \cdots & 0 & a & 1 & \cdots & 1 & \overline{b_{2r}} & d
\end{array}
\right]\!\!.
$
};
\draw (-1.18,-.55) node {$\underbrace{\phantom{MMMM}}$};
\draw (-1.18, -1.2) node{$2r+1$};
\draw (1,-.55) node {$\underbrace{\phantom{MMMM}}$};
\draw (1, -1.2) node {$2r$};
\draw (3.57,-.55) node {$\underbrace{\phantom{MMMM}}$};
\draw (3.57, -1.2) node {$2r-1$};
\end{tikzpicture}
}

\noindent
Here because of (1) either $a=0$, or $d=\overline{c}$, and, because of (2), at least two of the digits $a, \overline{b_{2r}}, d$ are ones. So we have the following options:

(i) $a=0$, $\overline{b_{2r}}=d=1$ (so that $b_{2r}=0)$;

(ii) $a=1$, $d=\overline{c}$, and $d=1$ (so that $c=0$);

(iii) $a=1$, $d=\overline{c}$, and $\overline{b_{2r}}=1$.

\noindent
This implies that for the matrix $G$, we, respectively, have the following options (i), (ii), and (iii):

\medskip
\centerline{
\begin{tikzpicture}
\draw (1,0) node
{
$G=\left[ \begin{array}{cc|ccc|cccc|ccccc}
1 & 0 & 1 & \cdots & 1 & 1 & \cdots & 1 & 1 & 0 & \cdots & 0  & 0 & c \\
0 & 1 & 1 & \cdots & 1 & 0 & \cdots & 0 & 0 & 1 & \cdots & 1 & 1 & 1
\end{array}
\right]\!\!,
$
};
\draw (-.9,-.55) node {$\underbrace{\phantom{MMMM}}$};
\draw (-.9, -1.2) node{$2r+1$};
\draw (1.1,-.55) node {$\underbrace{\phantom{MMMM}}$};
\draw (1.1, -1.2) node {$2r$};
\draw (3.75,-.55) node {$\underbrace{\phantom{MMMM}}$};
\draw (3.75, -1.2) node {$2r-1$};
\end{tikzpicture}
}

\medskip
\centerline{
\begin{tikzpicture}
\draw (1,0) node
{
$G=\left[ \begin{array}{cc|ccc|cccc|ccccc}
1 & 0 & 1 & \cdots & 1 & 1 & \cdots & 1 & 1 & 0 & \cdots & 0  & b_{2r} & 0 \\
0 & 1 & 1 & \cdots & 1 & 0 & \cdots & 0 & 1 & 1 & \cdots & 1 & \overline{b_{2r}} & 1
\end{array}
\right]\!\!,
$
};
\draw (-.98,-.55) node {$\underbrace{\phantom{MMMM}}$};
\draw (-.98, -1.2) node{$2r+1$};
\draw (1,-.55) node {$\underbrace{\phantom{MMMM}}$};
\draw (1, -1.2) node {$2r$};
\draw (3.62,-.55) node {$\underbrace{\phantom{MMMM}}$};
\draw (3.62, -1.2) node {$2r-1$};
\end{tikzpicture}
}

\medskip
\centerline{
\begin{tikzpicture}
\draw (1,0) node
{
$G=\left[ \begin{array}{cc|ccc|cccc|ccccc}
1 & 0 & 1 & \cdots & 1 & 1 & \cdots & 1 & 1 & 0 & \cdots & 0  & 0 & c \\
0 & 1 & 1 & \cdots & 1 & 0 & \cdots & 0 & 1 & 1 & \cdots & 1 & 1 & \overline{c}
\end{array}
\right]\!\!.
$
};
\draw (-.9,-.55) node {$\underbrace{\phantom{MMMM}}$};
\draw (-.9, -1.2) node{$2r+1$};
\draw (1.2,-.55) node {$\underbrace{\phantom{MMMM}}$};
\draw (1.2, -1.2) node {$2r$};
\draw (3.72,-.55) node {$\underbrace{\phantom{MMMM}}$};
\draw (3.72, -1.2) node {$2r-1$};
\end{tikzpicture}
}

\noindent
\underbar{Option (i):} In this option, up to permutation equivalence, the matrix $G$ has the form

\medskip
\centerline{
\begin{tikzpicture}
\draw (1,0) node
{
$G=\left[ \begin{array}{cc|ccc|cc|c|cc|cc}
1 & 0 & 1 & \cdots & 1 & 1 & 0 & \cdots & 1 & 0 & 1 & c\\
0 & 1 & 1 & \cdots & 1 & 0 & 1 & \cdots & 0 & 1 & 0 & 1  
\end{array}
\right]\!\!.
$
};
\draw (-.16,-.55) node {$\underbrace{\phantom{MMMm}}$};
\draw (-.16, -1.2) node{$2r+1$};
\draw (2.5,-.55) node {$\underbrace{\phantom{MMMMMMi}}$};
\draw (2.5, -1.2) node {$2r\, I_2$'s};
\end{tikzpicture}
}

\medskip\noindent
Note that $\begin{bmatrix} 1 & c\\ 0 & 1\end{bmatrix}\,\begin{bmatrix} 1 & c\\ 0 & 1\end{bmatrix}^T$ is equal to either $\begin{bmatrix} 1 & 0\\ 0 & 1\end{bmatrix}$\!, or to $\begin{bmatrix} 0 & 1\\ 1 & 1\end{bmatrix}$. Hence, using the block multiplication of matrices, we conclude that $GG^T$ is equal to either $\begin{bmatrix} 1 & 1\\ 1 & 1\end{bmatrix}$\!, or to $\begin{bmatrix} 0 & 1\\ 0 & 1\end{bmatrix}$\!. So by Massey's theorem the code $C$ is not LCD and we have a contradiction.

\medskip\noindent
\underbar{Option (ii):} In this option, up to permutation equivalence, the matrix $G$ has the form

\medskip
\centerline{
\begin{tikzpicture}
\draw (1,0) node
{
$G=\left[ \begin{array}{cc|ccc|cc|c|cc|cc}
1 & 0 & 1 & \cdots & 1 & 1 & 0 & \cdots & 1 & 0 & 1 & b_{2r}\\
0 & 1 & 1 & \cdots & 1 & 0 & 1 & \cdots & 0 & 1 & 1 & \overline{b_{2r}}  
\end{array}
\right]\!\!.
$
};
\draw (-.26,-.55) node {$\underbrace{\phantom{MMMm}}$};
\draw (-.26, -1.2) node{$2r+1$};
\draw (2.3,-.55) node {$\underbrace{\phantom{MMMMMMi}}$};
\draw (2.3, -1.2) node {$2r\, I_2$'s};
\end{tikzpicture}
}

\medskip\noindent
Reasoning similarly as in option (i) we conclude that $GG^T$ is equal to either $\begin{bmatrix} 0 & 0\\ 0 & 1\end{bmatrix}$\!, or to $\begin{bmatrix} 1 & 0\\ 0 & 0\end{bmatrix}$\!. So by Massey's theorem the code $C$ is not LCD and we have a contradiction.

\medskip\noindent
\underbar{Option (iii):} This option is analyzed in the same way as the option (ii).

\medskip\noindent
Since we got a contradiction with the assumption that $C$ is LCD, the inequality is proved.
\end{proof}

\medskip
We now state our main theorem.

\begin{theorem}
For any integers $r\ge 0$ and $s\in\{3,4,5,6,7,8\}$ we have:
\begin{align*}
\mathrm{LCD}[6r+3,2] &= 4r+2,\\
\mathrm{LCD}[6r+4,2] &= 4r+2,\\
\mathrm{LCD}[6r+5,2] &= 4r+2,\\
\mathrm{LCD}[6r+6,2] &= 4r+3,\\
\mathrm{LCD}[6r+7,2] &= 4r+4,\\
\mathrm{LCD}[6r+8,2] &= 4r+5.
\end{align*}
In other words:
\[\mathrm{LCD}[6r+s,2]=4r+\left\lfloor\frac{s}{6}\right\rfloor\!(1+s\negmedspace\negmedspace\negmedspace\mod 6)+2.\]
\end{theorem}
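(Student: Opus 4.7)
The plan is to obtain each of the six equalities by squeezing $\mathrm{LCD}[6r+s,2]$ between the corresponding lower bound from Proposition~\ref{lower_bound} and an upper bound from one of Propositions~\ref{upper_bound_1}, \ref{upper_bound_2}, \ref{upper_bound_3}. There is no new coding-theoretic content to produce; the work is entirely matching inequalities.

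Concretely, I would go case by case on $s\in\{3,4,5,6,7,8\}$. For $s=3$, Proposition~\ref{lower_bound} gives $\mathrm{LCD}[6r+3,2]\ge 4r+2$ and Proposition~\ref{upper_bound_1} gives $\mathrm{LCD}[6r+3,2]<4r+3$, so $\mathrm{LCD}[6r+3,2]=4r+2$. For $s=4$, the same two propositions yield $4r+2\le \mathrm{LCD}[6r+4,2]<4r+3$, so $\mathrm{LCD}[6r+4,2]=4r+2$. For $s=5$, Proposition~\ref{lower_bound} again gives $\mathrm{LCD}[6r+5,2]\ge 4r+2$, while Proposition~\ref{upper_bound_3} supplies $\mathrm{LCD}[6r+5,2]<4r+3$; hence $\mathrm{LCD}[6r+5,2]=4r+2$. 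For $s=6$, Proposition~\ref{lower_bound} gives $\ge 4r+3$ and Proposition~\ref{upper_bound_2} gives $<4r+4$, forcing $4r+3$. For $s=7$ and $s=8$, the lower bounds $\ge 4r+4$ and $\ge 4r+5$ from Proposition~\ref{lower_bound}, together with the upper bounds $<4r+5$ and $<4r+6$ from Proposition~\ref{upper_bound_1}, pin the values at $4r+4$ and $4r+5$, respectively.

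Once the six equalities are established, the closed-form expression
\[
\mathrm{LCD}[6r+s,2]=4r+\left\lfloor\frac{s}{6}\right\rfloor\!(1+s\bmod 6)+2
\]
is verified by direct substitution: for $s\in\{3,4,5\}$ the floor $\lfloor s/6\rfloor$ vanishes and the formula gives $4r+2$, matching the first three equalities; for $s=6,7,8$ we have $\lfloor s/6\rfloor=1$ and $s\bmod 6\in\{0,1,2\}$, so the formula returns $4r+3,\,4r+4,\,4r+5$, matching the last three equalities.

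The only obstacle worth noting is essentially bookkeeping: making sure the pairing of each lower bound with the correct upper bound is stated clearly, and that the evaluation of the floor-and-mod formula is checked on each of the six residues rather than just one or two. Since every needed inequality has already been proved, no new argument is required.
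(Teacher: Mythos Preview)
Your proposal is correct and follows exactly the paper's own approach: the theorem is deduced by combining the lower bounds of Proposition~\ref{lower_bound} with the matching upper bounds of Propositions~\ref{upper_bound_1}, \ref{upper_bound_2}, and \ref{upper_bound_3}, and the closed-form expression is then checked by substitution on each residue $s$. There is nothing to add.
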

\begin{proof}
The theorem follows from the propositions \ref{lower_bound}, \ref{upper_bound_1}, \ref{upper_bound_2}, and \ref{upper_bound_3}.
\end{proof}

\begin{remark}
Note that the last equality of the previous theorem holds also when $r=-1$; it gives $\mathrm{LCD}[2,2]=1$.
\end{remark}

\bigskip
\small

\end{document}